\newcommand{\amsprimary}[1]{{\footnotesize\noindent AMS 2010 \textit{Mathematics subject
classification:} Primary #1\vspace{1pc}}}
\newcommand{\keywordsnames}[1]{{\footnotesize\noindent\textit{Key words:} #1\vspace{1pc}}}
\newtheorem{theorem}{Theorem}
\newtheorem{teo}{Theorem}
\newtheorem{lemma}[teo]{Lemma}
\theoremstyle{definition}
\theoremstyle{remark}
\title[Dirichlet problem manifolds with multiple ends]{On the Dirichlet problem at infinity on three-manifolds 
with multiple ends}
\author{Jean C. Cortissoz \and Ram\'on Urquijo}
\email{jcortiss@uniandes.edu.co, ra.urquijo66@uniandes.edu.co}
\address{Department of Mathematics, Universidad de los Andes, Bogot\'a DC, Colombia}
\date{}
\begin{document}

\maketitle

\begin{abstract}
In this paper we prove that in a three-manifold with finitely many  
expansive ends, such that
each end has a neighborhood where the curvature is bounded above by a negative constant,
the Dirichlet problem at infinity is solvable, and hence that
such manifolds posses a wealth of bounded non constant
harmonic functions (and thus, Liouville's theorem does not hold). In the case of infinitely many expansive ends,
we show that if each end 
has a neighborhood where the curvature is bounded above by a negative constant,
then the Dirichlet problem at infinity is solvable 
for continuous boundary data at infinity which is uniformly bounded. Our method is based on
a result that does not
require explicit curvature assumptions, and hence
it can be applied to other
situations: we present
an example of a metric 
on an end with curvature
of indefinite sign (no matter
how long we go along the end) for which the Dirichlet Problem at Infinity is solvable with respect to that end.
We also present a related result in the case of surfaces with a pole which generalises
a celebrated criteria of Milnor.
\end{abstract}

\keywordsnames{Dirichlet problem at infinity, bounded harmonic function, multiple ends.}

{\amsprimary {31C05, 53C21}}

\tableofcontents

\section{Introduction}

\subsection{the three-dimensional
case} An $n$-dimensional open manifold with cylindrical ends is an $n$-manifold $M$ with a relatively compact
open submanifold $U\subset M$, such that $\partial \overline{U}=E$, where 
$M\setminus U=E\times\left[0,\infty\right)$ and
\[
E=\bigsqcup_{j} E_j,
\]
where each $E_j$ is a smooth closed $(n-1)$-submanifold of $M$. 
Each of the $E_j$ is called a neighborhood of an end (here $\bigsqcup$ represents disjoint union).

\medskip
An expansive end of $M$ is an end which has a neighborhood diffeomorphic to $N\times \left[0,\infty\right)$,
such that the metric in that neighborhood can be written as
\[
g=dr^2+\phi\left(\omega,r\right)^2g_N,
\]
$\phi$ a smooth function, and such that there is an $R>0$ so that if $r\geq R$ then $\phi_r>0$, and
$\phi\rightarrow \infty$ as $r\rightarrow\infty$. 

\medskip
To the careful reader, the way 
we need a metric to be written in the neighborhood of an end might seem a bit restrictive at 
first sight. However, to get a better appreciation of its significance, it must
be also observed that all that is needed in order to be able 
to write $g$ on the neighborhood of an end as described above,
is that the end can be foliated by the level surfaces of the distance function
from $N\times\left\{0\right\}$, and that each of the
leaves of the foliation 
is a smooth submanifold diffeomorphic to $N$ (which 
actually follows if the distance function from 
$N\times\left\{0\right\}$ is smooth); then, by the Uniformization Theorem, the metric on each
of these level surfaces can be written as $\phi\left(\omega, r\right)^2g_N$ for a fixed 
metric on $N$, and hence the metric $g$ on the neighborhood of the end has the form thus described.

\medskip
To compactify a neighborhood of an end, we just add a copy of $N$ at infinity, that is we first compactify 
$\left[0,\infty\right)$ to obtain $\left[0,\infty\right]$ and then we obtain the compactification
of the neighborhood of the end by taking
\[
\overline{\mbox{Neighborhood of end}}=N\times\left[0,\infty\right].
\]
All is now set to explain what we mean by the Dirichlet problem at 
infinity to be solvable with respect to an end. We say that \emph{the Dirichlet problem is 
solvable at infinity with respect to an end} with neighborhood
homeomorphic to $N\times\left[0,\infty\right)$, and boundary data
$f:N\longrightarrow \mathbb{R}$,
if there is a harmonic function $u:M\longrightarrow \mathbb{R}$ such that when restricted to the neighborhood
 of the end,
it can be extended continuously to the neighborhood's compactification (that is, 
$u\left(\omega,r\right)\rightarrow f\left(\omega,r\right)$ pointwise
as $r\rightarrow \infty$ 
in the topology of the compactification). Recall that a function 
$u:M\longrightarrow \mathbb{R}$ is harmonic if it is in $C^{2}\left(M\right)$
and $\Delta_g u=0$, where $\Delta_g$ is the Laplace-Beltrami operator defined 
with respect to the metric $g$.

\medskip
On the other hand, we say that the Dirichlet problem is solvable at infinity on a manifold with 
ends if given its ends $\left\{\varepsilon_j\right\}_{j=1,2,3,\dots}$, with each end $\varepsilon_j$
having a neighborhood homeomorphic to $N_j\times\left[0,\infty\right)$,
and with $f_j\in C\left(N_j\right)$, there is a harmonic function $u$ in $M$ such that 
$u$ solves the Dirichlet problem with respect to each of the ends.

\medskip
As might be hinted from its title, the purpose of this paper is to
study the Dirichlet problem at infinity on open three-manifolds
 with expansive ends. Indeed, we prove the following.

\begin{theorem}
\label{thm:main2}
Let $\left(M^3,g\right)$ a Riemannian manifold with finitely many cylindrical ends, all of which are expansive.
Assume that each end has a neighborhood such that there is a constant $a\neq 0$ so that
the curvature in the given neighborhood of the end satisfies $K\leq -a^2$. Then the Dirichlet problem is solvable at infinity.
\end{theorem}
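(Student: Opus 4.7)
I would run the classical Perron-type exhaustion-plus-barrier scheme. Exhaust $M$ by smooth compact domains $\Omega_k$ chosen so that $\partial\Omega_k\cap E_j = N_j\times\{r_k\}$ with $r_k\nearrow\infty$. On each $\Omega_k$ solve the standard Dirichlet problem $\Delta_g u_k=0$ with continuous boundary data equal to $f_j$ on $N_j\times\{r_k\}$; this is classically solvable since $\Omega_k$ is a bounded domain with smooth boundary in a Riemannian manifold. The maximum principle yields $\|u_k\|_\infty\leq \max_j\|f_j\|_\infty=:M$, and interior elliptic regularity together with a diagonal Arzel\`a--Ascoli argument produce a subsequence $u_k\to u$ in $C^{2}_{\mathrm{loc}}(M)$; the limit $u$ is bounded and harmonic. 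The nontrivial content of the theorem is to show $u(\omega,r)\to f_j(\omega)$ as $r\to\infty$ on each end, which is where the curvature hypothesis enters.

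\textbf{Barrier construction.} In the warped-product coordinates on $E_j$ one has
\[
\Delta f = f_{rr} + 2\,\frac{\phi_r}{\phi}\,f_r + \frac{1}{\phi^2}\,\Delta_{N_j} f,
\]
and the sectional curvature of a mixed $2$-plane equals $-\phi_{rr}/\phi$, so $K\leq -a^2$ forces $\phi_{rr}\geq a^2\phi$. Combined with the expansive hypothesis, Sturm comparison yields exponential growth $\phi(\omega,r)\geq c\,e^{ar}$ for $r$ large, whence $\int^\infty \phi(\omega,s)^{-2}\,ds<\infty$. The function
\[
w(\omega,r):=\int_r^\infty \frac{ds}{\phi(\omega,s)^2}
\]
decays uniformly to $0$ at infinity, and a direct calculation gives $w_{rr}+2(\phi_r/\phi)w_r=0$, so that $\Delta w$ reduces to the exponentially small angular term $\phi^{-2}\Delta_{N_j}w$. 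Fixing $\omega_0\in N_j$ and $\varepsilon>0$, with $\delta>0$ chosen small enough (by continuity of $f_j$) that $|f_j(\omega)-f_j(\omega_0)|<\varepsilon$ whenever $\omega$ is within distance $\delta$ of $\omega_0$, I would construct two-sided barriers of the form
\[
b^{\pm}(\omega,r)=f_j(\omega_0)\pm\varepsilon\pm A\,\eta(\omega)\pm B\,w(\omega,r)
\]
on the truncated cone $U=\{d_{N_j}(\omega,\omega_0)<\delta,\;r>R\}$, with $\eta$ a suitable nonnegative angular cutoff vanishing at $\omega_0$, and with $A,B$ chosen so that $b^+$ is superharmonic, $b^-$ is subharmonic, and $b^+\geq M\geq b^-$ on the parabolic boundary of $U$.

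\textbf{Conclusion and main obstacle.} Comparison on $\Omega_k\cap U$, passage to the $C^2_{\mathrm{loc}}$ limit in $k$, and finally letting $\varepsilon\to 0$ yield $u(\omega,r)\to f_j(\omega_0)$ as $(\omega,r)\to(\omega_0,\infty)$; finiteness of the set of ends permits this to be carried out simultaneously at every end. The genuine technical obstacle is the barrier itself: the angular contributions $\phi^{-2}\Delta_{N_j}\eta$ and $\phi^{-2}\Delta_{N_j}w$ must be balanced so that the sign of $\Delta b^{\pm}$ is definite throughout all of $U$, not merely asymptotically as $r\to\infty$. The abstract's emphasis that the underlying mechanism is curvature-free strongly suggests that this construction is packaged as a general proposition whose hypothesis is a structural integrability/growth condition on $\phi$, with the proof of Theorem~\ref{thm:main2} reducing to verification of that condition via the Sturm comparison described above.
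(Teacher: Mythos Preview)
Your architecture is right and your guess about the paper's structure is on the nose: the barrier argument is indeed packaged as a curvature-free result (Theorem~\ref{thm:existence_dirichlet}) whose hypothesis is an integrability/growth condition on $\phi$, and the proof of Theorem~\ref{thm:main2} reduces to checking that condition via Sturm comparison with $\overline{\phi}=\alpha\sinh(ar+1)$, exactly as you outline. The exhaustion-plus-limit scheme you propose is equivalent to the Perron supremum the paper uses; no issue there.

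Where your proposal and the paper diverge is precisely at the point you flag as the obstacle, and the paper's resolution is different from your additive ansatz. The paper's local barrier at $(p,\infty)$ is a \emph{product}
\[
\Theta(\omega,r)=\sigma(r)\,\vartheta(\omega),
\]
where $\vartheta<0$ is the first Dirichlet eigenfunction on a small ball $U\ni p$ in $N$ (so $\Delta_N\vartheta=-\lambda_1^2\vartheta$) and $\sigma(r)=\exp\bigl(-\int_r^\infty \lambda_1/\overline{\phi}\bigr)$ is built from a comparison function $\overline{\phi}=\overline{\phi}(r)$ of $r$ alone satisfying $\overline{\phi}\le\phi$, $\overline{\phi}_r/\overline{\phi}\le\phi_r/\phi$, and $\int^\infty 1/\overline{\phi}<\infty$. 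With this separation the angular piece is exact, and $\Delta\Theta\le 0$ reduces to the scalar inequality
\[
\sigma''+2\frac{\phi_r}{\phi}\sigma'-\frac{\lambda_1^2}{\phi^2}\sigma\ge 0,
\]
which follows directly from the two comparison inequalities on $\overline{\phi}$; there is no angular remainder to balance. Subtracting the constant $\vartheta(p)$ then gives the local barrier.

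Your ansatz $b^{\pm}=f_j(\omega_0)\pm\varepsilon\pm A\eta(\omega)\pm Bw(\omega,r)$ runs into a genuine sign problem at the target point: since $\eta\ge 0$ vanishes at $\omega_0$, one has $\Delta_N\eta\ge 0$ there, so the term $A\phi^{-2}\Delta_N\eta$ has the wrong sign for superharmonicity of $b^{+}$, and the radial piece $w$ (which depends on $\omega$ through $\phi$) contributes an uncontrolled $\phi^{-2}\Delta_N w$ rather than a compensating term. Note also that the paper's integrability hypothesis is $\int^\infty 1/\overline{\phi}<\infty$ (first power, and for a function of $r$ only), not your $\int^\infty \phi^{-2}<\infty$. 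The missing idea is precisely the eigenfunction/separation trick together with the one-variable minorant $\overline{\phi}$; once you adopt that, there is nothing left to balance.
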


Before we proceed, let us make a quick
remark on the hypotheses of the previous theorem. For 
a neighborhood of 
an expansive end $N\times \left[0,\infty\right)$ with
a metric as described 
above, the bound
on the curvature is only needed
for the sectional curvatures of
planes that contain a normal
and a tangential direction to
$N$ (and which in turn can be
computed, as we will show
below, as $-\phi_{rr}/\phi$).

\medskip
We also present the proof of a result in the case when the manifold has infinitely many expansive ends,
but before we present its statement, we need to give another definition.
We say that the boundary data $f_j:N_j\longrightarrow \mathbb{R}$ is 
\emph{uniformly bounded from below} if
there is an $m\in \mathbb{R}$ such that 
\[
f_j\geq m\quad \mbox{holds} \quad \mbox{for all}\quad j.
\]
In a similar fashion we define the boundary data being \emph{uniformly bounded from above},
and we shall say that the boundary data is \emph{uniformly bounded} if it is uniformly
bounded from both above and below.

\medskip
We are now ready to state our theorem regarding three-manifolds with infinitely many expanding ends.

\begin{theorem}
\label{thm:main3}
Let $\left(M^3,g\right)$ a Riemannian manifold with infinitely many 
cylidrical ends, all of which are expansive.
Assume that each end has a neighborhood such that there is a constant $a\neq 0$ so that
the curvature in the given neighborhood of the end satisfies $K\leq -a^2$. 
Then, if the boundary data is uniformly
bounded, the Dirichlet problem is solvable at infinity.
\end{theorem}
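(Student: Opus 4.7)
The plan is to reduce Theorem~\ref{thm:main3} to Theorem~\ref{thm:main2} by a compactness/exhaustion argument, relying on the fact that the barrier construction underlying the proof of Theorem~\ref{thm:main2} is \emph{local} to each end (the curvature hypothesis is imposed only in the given end neighborhood).

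First I would set up the a priori $L^\infty$ control. Fix $m$ and $M$ with $m\le f_j\le M$ for all $j$, extend each $f_j$ to a continuous function $\tilde f_j\colon N_j\times[0,\infty]\to[m,M]$ by means of a cutoff in the $r$ variable, and patch these across the compact core to obtain a bounded continuous $F\colon M\to[m,M]$. Now choose an exhaustion $\Omega_1\Subset\Omega_2\Subset\cdots$ of $M$ by relatively compact open sets with smooth boundary, and let $u_n\in C^{2}(\Omega_n)\cap C(\overline{\Omega_n})$ be the classical Dirichlet solution with $u_n=F$ on $\partial\Omega_n$. The maximum principle yields $m\le u_n\le M$, and interior Schauder estimates together with a diagonal Arzelà--Ascoli extraction produce a subsequence, still called $u_n$, converging in $C^{2}_{\mathrm{loc}}$ to a bounded harmonic function $u\colon M\to[m,M]$.

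The main obstacle is to show that $u$ attains the prescribed boundary value $f_j$ at infinity of each end $\varepsilon_j$. For this I would invoke the barriers built in the proof of Theorem~\ref{thm:main2}: for the expansive end $\varepsilon_j$ with curvature bound $K\le-a^{2}$ on $N_j\times[R_0,\infty)$, that proof produces super- and subharmonic functions $w_j^+$ and $w_j^-$ on $N_j\times[R_0,\infty)$ which are bounded and satisfy $w_j^\pm(\omega,r)\to f_j(\omega)$ as $r\to\infty$. Since the curvature bound is a purely local hypothesis on the end neighborhood, these barriers depend only on the geometry of $N_j\times[R_0,\infty)$ and are therefore available in our setting too. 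The barrier construction carries a free additive parameter (or equivalently a choice of large cutoff level) that can be tuned so as to arrange
\[
w_j^-\le u_n\le w_j^+ \quad\text{on}\quad N_j\times\{R_0\},
\]
uniformly for all $n$ large enough that $N_j\times\{R_0\}\subset\Omega_n$, while preserving the asymptotic behavior $w_j^\pm\to f_j$ at infinity.

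With this comparison at the inner boundary, the maximum principle applied to $u_n-w_j^+$ and to $w_j^--u_n$ on each bounded slab $\bigl(N_j\times[R_0,L]\bigr)\cap\Omega_n$ (using that $w_j^\pm-u_n$ is uniformly close to $w_j^\pm-F$ at the outer boundary, which is controlled) extends the inequality to the entire intersection; letting $n\to\infty$ yields $w_j^-\le u\le w_j^+$ on all of $N_j\times[R_0,\infty)$. Sending $r\to\infty$ sandwiches $u(\omega,r)$ between two quantities both converging to $f_j(\omega)$, which gives the desired boundary behavior at $\varepsilon_j$; since $j$ is arbitrary, this completes the proof. The delicate point to monitor is that the free parameter in the barriers can indeed be absorbed at the inner boundary without corrupting the asymptotic limit $f_j$ at infinity; this is precisely the content of the barrier lemma behind Theorem~\ref{thm:main2}, whose applicability here is guaranteed by its end-by-end character.
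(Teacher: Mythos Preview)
Your approach differs from the paper's. The paper does not use an exhaustion argument at all: it applies Perron's method directly on $M$. The uniform lower bound $m$ guarantees that the Perron class $\mathcal{S}$ of bounded continuous subharmonic functions with boundary values $\le f_j$ at each end is nonempty (the constant $m$ lies in it), the uniform upper bound $M$ together with the maximum principle caps every element of $\mathcal{S}$ by $M$, and $u=\sup_{s\in\mathcal S}s$ is the candidate. The analytic content is entirely in Theorem~\ref{thm:existence_dirichlet2}: every point $(p,\infty)$ of each $N_j\times\{\infty\}$ is a regular boundary point (admits a local barrier), so the standard Perron argument forces $u$ to attain $f_j(p)$ there. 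That is the whole proof; the uniform boundedness of the data is used only to make $\mathcal S$ nonempty and bounded above.

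Your exhaustion scheme is a legitimate alternative route, but your description of what the barrier lemma ``produces'' is inaccurate, and this is a genuine gap. The paper does \emph{not} build super/subsolutions $w_j^{\pm}$ on all of $N_j\times[R_0,\infty)$ with $w_j^{\pm}(\omega,r)\to f_j(\omega)$; it builds, for each fixed $p\in N_j$, a nonnegative superharmonic function $b_p$ on a small tube $U_p\times[r_0,\infty)$ vanishing only at $(p,\infty)$. These are point barriers and do not assemble into a single $w_j^{+}$ converging to the \emph{function} $f_j$. Moreover, your ``free additive parameter'' cannot simultaneously enforce the inner-boundary inequality and preserve the asymptotic value: adding a constant to a supersolution shifts its limit at infinity by that same constant. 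The way to repair your argument is to work pointwise: for fixed $p$ and $\epsilon>0$, compare $u_n$ with $f_j(p)\pm\epsilon\pm K\,b_p$ on the tube (choosing $K$ large so the inequality holds on the finite part of the tube boundary, using only $m\le u_n\le M$, and using the continuity of your extension $F$ at $(p,\infty)$ on $\partial\Omega_n$), pass to the limit in $n$, then send $\epsilon\to0$. With that modification your route works and is essentially equivalent to the paper's, though the Perron formulation is considerably shorter.
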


\medskip
Theorems \ref{thm:main2} and \ref{thm:main3} are closely related to Proposition 3.9 in \cite{Choi},
which in part states that on a three-manifold with a pole,
such that the metric can be written as
\begin{equation}
\label{eq:metric}
g=dr^2 + \phi\left(\omega,r\right)^2g_{\mathbb{S}^2}, \quad \omega\in \mathbb{S}^2,
\end{equation}
where $g_{\mathbb{S}^2}$ is the standard metric on the unit sphere, and such that 
outside a compact set the curvature satisfies that, for $A>1$, it is bounded above
by $-A/r^2\log r$, the Dirichlet problem is solvable at infinity with respect to the pole.
Notice also, that due to the Uniformization Theorem, (\ref{eq:metric}) is a very general
form of writing a metric on $\mathbb{R}^3$.

\medskip
Our proof of Theorems \ref{thm:main2} and \ref{thm:main3} are based 
on Theorem \ref{thm:existence_dirichlet}, which does not use any curvature assumption.
Indeed, to make
an effective use of 
Theorem \ref{thm:existence_dirichlet},
the curvature assumptions are employed to
obtain the required 
properties on $\phi$ so
that the aforementioned theorem applies,
and thus our main results can be extended to other cases. 
For instance, if the curvature satisfies $-A/r^2\log r$, $A>1$, on a neighborhood
of each expansive end, Theorems \ref{thm:main2} and Theorem \ref{thm:main3} are 
still valid. Furthermore, 
at the end of Section \ref{sect:Dirichlet} we show an example of a metric on
the neighborhood of an end whose curvature does not have a definite sign, no 
matter how far we go along the end, but
for which the Dirichlet problem at infinity (with respect to the end) is solvable
(Section \ref{subsect:an_example}).

%Also, although it is not directly
%implied by their statements, in the course of the proof of our results the reader will notice that 
%\emph{as soon as a three-manifold with a uniform negative bound
%from above on its curvature has an expansive end, it also has a wealth of 
%bounded non constant harmonic functions.} 

\medskip
The Dirichlet problem at infinity on negatively curved manifolds has a rich history, starting
with Choi's seminal work \cite{Choi}. Among the most important results related to this problem, we find
the existence results of Anderson \cite{A} and Sullivan \cite{Sullivan}, who showed that if $M$ is a Cartan-Hadamard
manifold whose curvature is bounded above and below by two negative constants, then the
Dirichlet problem at infinity can be solved for any continuous boundary data. 
Since then,
one of the main focus has been understanding the role of the bound from below 
for the existence of solutions to the Dirichlet problem (\cite{Borbely, Ji}), as Ancona \cite{Ancona} has shown that, in
general, if there is no lower bound on the curvature there are three-manifolds
with sectional curvature bounded above by $-1$ such that the Dirichlet problem at infinity has no solution.

\subsection{The two-dimensional case}
The solvability 
of the Dirichlet Problem at Infinity in 
the significant case of Cartan-Hadamard surfaces (and thus with just one end) is well understood
due, in part, to
the work of R. Neel, who, using
Probabilistic techniques, gave a sharp condition on the function $\phi$ in (\ref{eq:metric}) that
ensures the solvability of the Dirichlet problem at infinity (\cite{Neel14, Neel21}). Our
results (and 
mainly Therorem \ref{thm:existence_dirichlet}), proved
usign more analytics methods, are a natural extension
of his work to the next lowest 
dimension and to more
complicated topologies. However, the ideas employed
in the three-dimensional case work quite well in dimension two,
an thus, regarding
the two-dimensional case, we 
shall prove the following.
\begin{theorem}
\label{thm:twodim}
    Let $\left(M^2,g\right)$ be a surface with a pole. Then, 
    the metric $g$ can be written as
    \[
    g=dr^2+\phi\left(\theta,r\right)^2d\theta^2.
    \]
    If there is a positive 
    $\overline{\phi}\left(r\right)$
    such that for all $r>0$ large
    enough and all $\theta$ satisfies
    \[
    0\leq
\frac{\overline{\phi}_r\left(r\right)}{\overline{\phi}\left(r\right)}
    \leq \frac{\phi_r\left(\theta, r\right)}{\phi\left(\theta,r\right)},
     \]
     and such that
     \[
     \int^{\infty}\frac{1}{
     \overline{\phi}\left(s\right)}\,
     ds<\infty,
     \]
     then the Dirichlet problem
     at infinity is solvable for
     any continuous boundary data.
\end{theorem}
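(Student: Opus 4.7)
The plan is to apply the strategy of Theorem \ref{thm:existence_dirichlet} to dimension two, using the comparison function $\overline{\phi}$ to build a radial supersolution of the Laplace--Beltrami operator. In the coordinates of the statement,
$$\Delta_g u = u_{rr} + \frac{\phi_r}{\phi}\,u_r + \frac{1}{\phi}\,\partial_\theta\!\left(\frac{u_\theta}{\phi}\right),$$
so for a radial $w(r)$ one has $\Delta_g w = w_{rr} + (\phi_r/\phi)w_r$. The natural candidate is
$$V(r):=\int_r^{\infty}\frac{ds}{\overline{\phi}(s)},$$
which is well defined by hypothesis, positive, and strictly decreasing to zero. Direct computation gives $V'' + (\overline{\phi}_r/\overline{\phi})V'\equiv 0$; combined with $V'<0$ and $\phi_r/\phi\ge\overline{\phi}_r/\overline{\phi}$, this yields $\Delta_g V\le 0$. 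Moreover $\overline{\phi}_r\ge 0$ together with $\int^\infty 1/\overline{\phi}<\infty$ forces $\overline{\phi}\to\infty$, and integrating the logarithmic derivative inequality gives $\phi(\theta,r)\ge c\,\overline{\phi}(r)$ uniformly in $\theta$.

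Next I would run the usual exhaustion: let $u_R$ solve $\Delta_g u_R=0$ on the geodesic disk $B_R$ centered at the pole with boundary value $f(\theta)$; by the maximum principle $\|u_R\|_\infty\le\|f\|_\infty$, and interior Schauder estimates produce a subsequential $C^2_{\mathrm{loc}}$ limit $u$ harmonic on $M$. The task is to show $u(\theta,r)\to f(\theta_0)$ as $(\theta,r)\to(\theta_0,\infty)$ for every $\theta_0\in S^1$. Fix $\varepsilon>0$, choose $\delta>0$ so that $|f-f(\theta_0)|<\varepsilon$ on the arc $I_{2\delta}$ about $\theta_0$, and a smooth cutoff $\chi\ge 0$ vanishing on $I_\delta$ and equal to $1$ outside $I_{2\delta}$. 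I would then compare $u_R$ on the annulus $B_R\cap\{r\ge r_0\}$ with two-sided barriers
$$B_\pm(\theta,r)=f(\theta_0)\pm\varepsilon\pm K\chi(\theta)\pm K'\Phi(r),$$
where $\Phi(r)$ is a strictly superharmonic radial profile derived from $V$, and $K,K',r_0$ are chosen so that $B_+\ge f$ on $\partial B_R$ and $B_+\ge u_R$ on $\{r=r_0\}$ (and analogously for $B_-$). The maximum principle then gives $|u_R-f(\theta_0)|<C\varepsilon$ on $I_\delta\times[r_1,R]$ uniformly in $R$, and letting $R\to\infty$ concludes the proof.

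The main obstacle I foresee is the choice of the strictly superharmonic profile $\Phi$: since $\Delta_g\chi$ has size at most $C_1/\phi^2 + C_2|\phi_\theta|/\phi^3$, which decays at infinity but need not vanish, the naive choice $\Phi=V$ only gives $\Delta_g V\le 0$ without a quantitative negative bulk. I would therefore take $\Phi=V^{1-\eta}$ for small $\eta>0$; a direct computation using the hypothesis yields
$$\Delta_g V^{1-\eta}\le -\eta(1-\eta)\,\frac{V(r)^{-\eta-1}}{\overline{\phi}(r)^{\,2}},$$
and since $V(r)\to 0$ this negative bulk grows relative to $|\Delta_g\chi|$, so choosing $r_0$ large enough ensures $\Delta_g B_+\le 0$ (and $\Delta_g B_-\ge 0$) throughout the annular region, closing the verification and yielding the theorem.
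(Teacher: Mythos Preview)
Your approach works in outline but takes a genuinely different route from the paper. The paper proceeds via Perron's method (already set up in Section~\ref{section:preliminaries}) and at each boundary point $(\theta_0,\infty)$ builds a \emph{multiplicative} local barrier
\[
-\exp\!\left(-\int_r^\infty \frac{ds}{\overline{\phi}(s)}\right)\cos(\theta-\theta_0)\;+\;1,
\]
the direct two-dimensional analogue of the barrier $\sigma(r)\vartheta(\omega)$ in Theorem~\ref{thm:existence_dirichlet}, with $\cos(\theta-\theta_0)$ playing the role of the first Dirichlet eigenfunction on a small arc. Because $\partial_\theta^2\cos=-\cos$, the radial ODE that $\sigma$ must satisfy is exactly the one handled by the hypothesis $\overline{\phi}_r/\overline{\phi}\le\phi_r/\phi$ (together with $\overline{\phi}\le c\,\phi$, which you correctly note follows by integrating that inequality), and no strictly negative bulk is needed. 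By contrast, your additive ansatz $K\chi(\theta)+K'\Phi(r)$ forces you to manufacture strict superharmonicity via the $V^{1-\eta}$ trick solely in order to absorb $\Delta_g\chi$; the product barrier sidesteps this entirely and is considerably shorter. Your exhaustion by $u_R$ is likewise replaced by taking the Perron supremum directly.

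One caution that applies to both arguments: the full Laplacian of $g=dr^2+\phi(\theta,r)^2d\theta^2$ carries a first-order angular term $-(\phi_\theta/\phi^3)\,\partial_\theta$ (this is the $n=2$ instance of Lemma~\ref{lemma:Laplacian}). Your assertion that $|\Delta_g\chi|\le C_1/\phi^2+C_2|\phi_\theta|/\phi^3$ ``decays at infinity'' is not justified by the stated hypotheses, which say nothing about $\phi_\theta$. The paper's displayed Laplacian in Section~\ref{subsect:2D} omits this term as well, so the gap is shared; but in your additive scheme it is more exposed, since the bad contribution is supported exactly on the transition region $I_{2\delta}\setminus I_\delta$ where $\chi'\neq 0$ and must be dominated pointwise by the radial bulk $-\eta(1-\eta)V^{-\eta-1}/\overline{\phi}^{\,2}$ there.
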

If the metric in $\mathbb{R}^2$
is rotationally symmetric, then
$\overline{\phi}$ can be taken
as $\phi$ itself, and hence the
previous theorem generalises
Milnor's result in \cite{Milnor77}
(at least to decide whether the
surface is hyperbolic), and also
the main result in \cite{Co2} in the two-dimensional case. Notice that Theorem
\ref{thm:twodim} does not require the 
surface to be Cartan-Hadamard. Using Milnor's argument in \cite{Milnor77},
it can be shown that if the surface is Cartan-Hadamard and the
sectional curvature of the 
surface is less than
$-A/r^2\log r$ for $A>1$, then
the Dirichlet Problem 
at Infinity is solvable. This
observation, together with the arguments
presented in Section 3.1 of
\cite{Co} give an alternative
new proof of Milnor's criteria
for surfaces with a pole. The
previous theorem can also be
extended to surfaces
of multiple ends, and there is
an analogue statement of the
in the three-dimensional
case, but we shall leave the
task to the interested reader.

\subsection{} As indicated in the previous paragraphs, 
most results on the Dirichlet problem at infinity are for \emph{simply connected manifolds}
with just one single exception known to the authors: the results of Choi \cite{Choi} for surfaces with 
\emph{finitely} many ends. Thus, the
results in this paper represent an advance in understanding the 
Dirichlet problem on manifolds with multiple ends.
Also, although it is not directly
implied by their statements, in the course of the proof of our results the reader will notice that 
\emph{as soon as a three-manifold has an expansive end with a neighborhood 
(of the end) whose curvature is bounded from above by a negative constant, it also has a wealth of 
bounded nonconstant harmonic functions (and thus,
Liouville's theorem does not hold).} This, as is well known, has implications on the behavior of the Brownian motion
on the manifold \cite{Grigoryan}.

\medskip
The idea of writing this paper
comes from a letter from professor Neel who thought that the ideas
presented in \cite{Co2} could be used to study the Dirichlet problem at
infinity for manifolds with multiple ends: the authors want to thank him for sharing his insight.
However, in the case of \cite{Co2}, a more direct method using Fourier expansions is
employed, whereas in this paper, as a direct attack with the methods of \cite{Co2} did not work, 
we had to combine them with Perron's method in order to prove our main results, being
the main difficulty with this approach to produce adecuate barriers at the points at 
infinity.
We must observe that whereas Perron's method is outstanding in its simplicity, the methods proposed in
\cite{Co2} allowed a bit more general boundary data, 
as we were able to show solvability in the case of boundary data in $L^2$.

\medskip
This paper is organised as follows. In Section \ref{section:preliminaries} we give 
and prove some preliminary lemmas. In Section \ref{sect:Dirichlet}, we prove
the solvability of the Dirichlet problem at infinity with respect to an end, and then
apply this result to give a proof of our main results in \ref{subsect:proofmain} and 
\ref{subsect:2D}.

\medskip
%In this paper we define what is meant by solvability of the Dircihlet problem at infinity on
%manifolds with multiple ends, and then go on to show that under the assumption of expanding ends,
%the Dirichlet problem at infinity 

\section{Preliminaries}
\label{section:preliminaries}
In this section, we collect some
results that will be needed 
in the proofs of Theorems \ref{thm:main2} and \ref{thm:main3}. 
%We begin
%with a useful way of constructing
%subharmonic functions.

%\begin{lemma}
%\label{lemma:subharmonicity}
%Let $\Omega$ be a subdomain of $M$, and let $u\in C^2\left(\Omega\right)\cap C\left(\overline{\Omega}\right)$ 
%be a harmonic function such
%that $u=0$ on $\partial \Omega$. Then $v=\max\left\{u,0\right\}$ 
%extended as $0$ on $M\setminus \overline{\Omega}$ is a continuous subharmonic function.
%\end{lemma}

%\begin{proof}
%Let $U$ be a subdomain of $M$. If $M$ is wholly contained in $\Omega$ and $h$ 
%is a harmonic function such that $h=v$ on $\partial U$, then, since the pointwise maximum
%of subharmonic functions is subharmonic, we must have that $h\geq v$ on $U$. If
%$U$ is wholly contained in $M\setminus \overline{\Omega}$, then $h=0$ is harmonic and
%$h\geq v$. The interesting case occurs when part or the whole of $U$ is contained in $\Omega$:
%Let $h=v$ on $\partial U$, then $h\geq 0$; for the part of $\partial U\subset \overline{\Omega}$, 
%$h\geq 0$ and also $h=v$ on $\partial \Omega \cap \partial \left(U\cap \Omega\right)$,
%thus $h\geq v$ on $\Omega\cap U$. Since $h\geq 0$ then $h\geq v$ on $M\setminus \overline{\Omega}$
%and thus $h\geq v$. This shows that $v$ is subharmonic.

%\end{proof}

%\begin{lemma}
%\label{lemma:subharmonicity_2}
%Let $s$ be a nonnegative continuous function such that in the set $\left\{s>0\right\}$
%we have that $\Delta s\geq 0$. Then $s$ is a subharmonic function.
%\end{lemma}
%\begin{proof}
%Take as $\Omega=\left\{s>0\right\}$ in the previous lemma.
%\end{proof}

\begin{lemma}
\label{lemma:curvature}
Assume $\left(N, g_N\right)$ is locally conformally flat. Consider the metric
\[
g=dr^2+\phi^2\left(\omega,r\right) g_N,
\]
on $N\times\left[0,\infty\right)$.
Then for any direction $e_\sigma$ tangent to $N$, we have that 
the sectional curvature satisfies
\[
R_{r\sigma\sigma r}=-\frac{\phi_{rr}}{\phi}.
\]
\end{lemma}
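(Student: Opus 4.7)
The plan is to exploit the local conformal flatness of $(N, g_N)$ to put the metric in a diagonal form, in which a direct Christoffel symbol computation makes the identity transparent.

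In a neighborhood of any point of $N$, I would pick local conformal coordinates $x^1,\ldots,x^{n-1}$ so that $g_N = \psi(\omega)^2 \sum_\sigma (dx^\sigma)^2$ for some positive function $\psi$. In the associated coordinates $(r, x^\sigma)$ on the end, the metric reads
\begin{equation*}
g = dr^2 + h(\omega, r)^2 \sum_\sigma (dx^\sigma)^2,
\quad\text{where}\quad h := \phi\,\psi.
\end{equation*}
Since $\psi$ is independent of $r$, one has $h_{rr}/h = \phi_{rr}/\phi$, so it suffices to prove $R_{r\sigma\sigma r} = -h_{rr}/h$ in the orthonormal frame $\{\partial_r,\,\partial_\sigma/h\}$.

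From here I would compute the Christoffel symbols of $g$ in these coordinates. The only ones that enter the relevant component $R^{r}{}_{\sigma r\sigma}$ through the standard formula are $\Gamma^r_{\sigma\sigma}=-h h_r$ and $\Gamma^\sigma_{r\sigma}=h_r/h$; the other nonzero Christoffels carry $x^\sigma$-derivatives of $h$, but by the structure of $R^{r}{}_{\sigma r\sigma}$ they always pair with a symbol that vanishes. The $h_r^2$ pieces coming from $\partial_r\Gamma^r_{\sigma\sigma}$ and from the quadratic term $\Gamma^\sigma_{r\sigma}\Gamma^r_{\sigma\sigma}$ cancel, leaving $R^{r}{}_{\sigma r\sigma} = -h\,h_{rr}$. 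Dividing by $g_{rr}\,g_{\sigma\sigma}=h^2$ to pass to the orthonormal frame yields exactly $-h_{rr}/h = -\phi_{rr}/\phi$.

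The only real obstacle is a bit of careful bookkeeping: one must verify that the $x^\sigma$-derivatives of $h$, which do appear in $\Gamma^\sigma_{\sigma\sigma}$, $\Gamma^\sigma_{\tau\tau}$ and $\Gamma^\sigma_{\sigma\tau}$, do not leak into this particular component. A cleaner conceptual route that sidesteps this bookkeeping entirely is the Jacobi-field argument: the curves $r\mapsto(\omega_0,r)$ are unit-speed geodesics of $g$; a variation through such geodesics produced by a $g_N$-unit tangent vector to $N$ at $\omega_0$ gives a Jacobi field of $g$-length exactly $\phi(\omega_0,r)$, and the Jacobi equation along the geodesic reduces to $\phi_{rr} + \phi\,K(\partial_r,e_\sigma) = 0$, which yields the identity at once without invoking conformal coordinates. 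I would present the coordinate proof, as it matches the spirit of the paper, and mention the Jacobi argument as an alternative.
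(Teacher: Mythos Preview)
Your proposal is correct and follows essentially the same route as the paper: both exploit local conformal flatness to write $g_N=\psi^2\delta$, compute the few relevant Christoffel symbols, and observe that the $\phi_r^2$ (respectively $h_r^2$) contributions cancel to leave $-\phi\phi_{rr}\psi^2$ before normalising. Your packaging $h=\phi\psi$ is a mild notational streamlining of exactly the paper's computation, and your aside on the Jacobi-field argument is a pleasant alternative the paper does not mention.
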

\begin{proof}
As $g_N$ is locally conformally flat, it can be 
written, locally, as $g_N=\psi\left(\omega\right)^2\delta_{\alpha\beta}$. In the
following calculation, we use the expression of the curvature tensor
in local coordinates.
\[
R_{r\alpha \alpha}^{\quad\,\,\, r}=-\partial_{\alpha}\Gamma^{r}_{\alpha r}+\partial_r\Gamma^{r}_{\alpha\alpha}
-\Gamma_{\alpha j}^{r}\Gamma_{r\alpha}^j+\Gamma_{rj}^{r}\Gamma^j_{\alpha\alpha}.
\]
It is straightforward to compute that
\[
\Gamma^r_{rj}=0, \quad \mbox{for all} \quad j, \quad \Gamma_{\alpha\beta}^r=0,
\]
\[
\Gamma^{r}_{\alpha\alpha}=-\frac{1}{2}g_{\alpha\alpha,r}=-\phi\phi_r \psi^2
\]
and
\[
\Gamma_{r\alpha}^\alpha=\frac{1}{2}g^{\alpha\alpha}g_{\alpha\alpha,r}=\frac{\phi_r}{\phi},
\]
and hence,
\[
R_{r\alpha \alpha}^{\quad\,\,\, r}=-\phi\phi_{rr}\psi^2-\phi_r^2\psi^2+\phi_r^2\psi^2=
-\phi\phi_{rr}\psi^2,
\]
from which the claimed formula for the sectional curvature follows.

\end{proof}

The following basic tool is a classical comparison result due to Sturm.

\begin{lemma}[Comparison Lemma]
Let $u$ and $v$ be positive strictly increasing functions on $\left[a,\infty\right)$.
\begin{itemize}
    \item{(a)} If $u\left(a\right)<v\left(a\right)$, $u'\left(a\right)<v'\left(a\right)$ and
    $u''/u\leq v''/v$ on  $\left[a,\infty\right)$ then $u\leq v$ on $\left[a,\infty\right)$.
    \item{(b)}
     If $u'\left(a\right)/u\left(a\right)<v'\left(a\right)/v\left(a\right)$ and
    $u''/u\leq v''/v$ on  $\left[a,\infty\right)$ then $u'/u\leq v'/v$ on $\left[a,\infty\right)$.
\end{itemize}
\end{lemma}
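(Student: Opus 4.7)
The plan rests on a single key computation: I introduce the Wronskian-like quantity
\[
W(r) := u'(r)v(r)-u(r)v'(r),
\]
and observe that $W'(r)=u''(r)v(r)-u(r)v''(r)$. Multiplying the hypothesis $u''/u\leq v''/v$ by $uv>0$ gives $u''v\leq uv''$, so $W'(r)\leq 0$, that is, $W$ is non-increasing on $[a,\infty)$.

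For part (b) this is essentially the whole argument. Since
\[
W(a)=u(a)v(a)\left(\frac{u'(a)}{u(a)}-\frac{v'(a)}{v(a)}\right)<0
\]
by the hypothesis, the monotonicity of $W$ forces $W(r)\leq W(a)<0$ for every $r\geq a$; dividing by $u(r)v(r)>0$ yields $u'(r)/u(r)<v'(r)/v(r)$, which implies the desired non-strict inequality.

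For part (a) I would argue by contradiction. If the conclusion fails, continuity and $u(a)<v(a)$ produce a first crossing point $r_0>a$ at which $u(r_0)=v(r_0)$ and $u<v$ on $[a,r_0)$; since $v-u$ is nonnegative on $[a,r_0]$ and vanishes at $r_0$, $(v-u)'(r_0)\leq 0$, i.e.\ $u'(r_0)\geq v'(r_0)$. Because $(v-u)(a)>0$, $(v-u)'(a)>0$, and $(v-u)(r_0)=0$, the function $h:=v-u$ must attain an interior maximum at some $r^*\in(a,r_0)$, where $h'(r^*)=0$ and $h''(r^*)\leq 0$. Rewriting $u''v\leq uv''$ as $v''-u''\geq (v''/v)(v-u)$ gives the pointwise inequality $h''\geq (v''/v)h$, and evaluated at $r^*$ (where $h(r^*)>0$) this forces $(v''/v)(r^*)\leq 0$.

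The main obstacle is closing the contradiction at $r^*$. In the setting of the paper this is immediate: the comparison function $v$ will be convex with $v''/v$ bounded below by a positive constant (coming from the curvature bound $K\leq -a^2$ and Lemma \ref{lemma:curvature}, which identifies $-\phi_{rr}/\phi$ with the sectional curvature), so $(v''/v)(r^*)>0$ is already known and the above inequality is the contradiction we need. To obtain the lemma in the generality stated, one should supplement the argument by noting that $W(r^*)=u'(r^*)(v(r^*)-u(r^*))>0$, so by monotonicity $W\geq W(r^*)>0$ on $[a,r^*]$; combining this with the extra hypothesis $u'(a)<v'(a)$ in a Picone-type identity integrated from $a$ to $r^*$ eliminates the remaining case.
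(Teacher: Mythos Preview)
The paper itself gives no proof of this lemma: it is simply recorded as ``a classical comparison result due to Sturm,'' so there is nothing to compare your argument against and it must stand on its own. Your Wronskian argument for part~(b) is correct and is the standard one.

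For part~(a), however, the closing sentence is a genuine gap, and in fact no argument can fill it: the statement is \emph{false} in the generality written. Take $a=0$, $v(r)=\sqrt{r+1}$, and $u(r)=\sqrt{r+1}\bigl(0.9+0.04\log(r+1)\bigr)$ on $[0,\infty)$. Both functions solve $y''=-\tfrac{1}{4}(r+1)^{-2}\,y$, so $u''/u=v''/v$ identically; both are positive and strictly increasing on $[0,\infty)$; and $u(0)=0.9<1=v(0)$, $u'(0)=0.49<0.5=v'(0)$. Nevertheless $u(r)>v(r)$ as soon as $\log(r+1)>2.5$. Hence the vague appeal to ``a Picone-type identity integrated from $a$ to $r^*$'' cannot possibly close the argument. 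Your own diagnosis is the right one: the maximum-principle step only yields $(v''/v)(r^*)\le 0$, and this is a contradiction precisely when one also knows $v''/v>0$. That extra convexity is exactly what holds in the paper's sole use of the lemma (just before Theorem~\ref{thm:existence_dirichlet2}, where $\overline{\phi}''/\overline{\phi}=a^2$ and $\phi_{rr}/\phi\ge a^2>0$), so your argument does cover the intended application; but part~(a) as stated needs an additional hypothesis---for instance $v''\ge 0$, or replacing $u'(a)<v'(a)$ by $u'(a)/u(a)<v'(a)/v(a)$, after which (a) follows immediately from (b) by integrating $\bigl(\log(v/u)\bigr)'\ge 0$.
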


\subsection{Perron's method}

A function $s:M\longrightarrow \mathbb{R}$ is called subharmonic if 
for any bounded open set $\Omega \subset M$ if for any $h$ a continuous harmonic
function such that $h=s$ on $\partial \Omega$ then $h\geq s$. A function $v$ is
called superharmonic if the opposite inequality holds. Observe that
when $s$ is $C^2$ (resp. $v$ is $C^2$), then it satisfies that $\Delta s\geq 0$
(resp. $\Delta v\leq 0$); and viceversa: when a $C^2$ satisfies that $\Delta s\geq 0$
(resp. $\Delta v\leq 0$) then it is subharmonic (resp. superharmonic).

\medskip
Given a set of boundary data, consider the set of continuous subharmonic functions 
$s:M\longrightarrow \mathbb{R}$ such that $s$ can be extended continuously
to the compactification of a neighborhood of each end, and 
such that for the compactification $N\times\left[0,\infty\right]$ we have that
$s|_{N_j\times \left\{\infty\right\}}\leq f_j$. Then,
\emph{the Perron solution} to the Dirichlet problem is given by
\[
u=\sup_{s\in \mathcal{S}} s.
\]
Next we recall the concept of a barrier. A \emph{barrier} $b$
on the neighborhood $Z=N\times\left[0,\infty\right)$ of an end 
at a point $\left(p,\infty\right)$
is a nonnegative superharmonic function $b: \overline{Z}\longrightarrow \mathbb{R}$ such that $b\left(p,\infty\right)=0$ and 
so that
$b>0$ on $\overline{Z}\setminus \left\{\left(p,\infty\right)\right\}$
(here $\overline{Z}=N\times\left[0,\infty\right]$).
%there is a $\delta>0$ such that 
%outside a compact neighborhood of $\left(p,\infty\right)$, $b\geq \delta$. 
A point
$\left(p,\infty\right)$ for which a barrier can be constructed is called 
\emph{regular boundary point}.

\medskip
The concept of barrier may seem too strong as to be of use or
easy to construct. However, there
is the concept of \emph{local barrier}, and these local barriers 
are what we will actually construct to prove
our main results. A local barrier is
 a superharmonic function defined on a neighborhood $U\subset \overline{N}$ of
 $\left(p,\infty\right)$ which satisfies the definition given above
 on $U$ (see page 25 in \cite{Gilbarg}). It is well known that from local barriers, barriers
can be constructed as is described in \cite{Gilbarg}.

\medskip
Barriers are useful because if a barrier can be constructed at every boundary point, then
the Perron solution to the Dirichlet problem will satisfy the boundary conditions. A 
proof of this fact can be found in Lemma 2.13 in \cite{Gilbarg}.

%It is elementary now to show that if there is an $s_j\in \mathcal{S}$ such that $s_j|_{N_j\times \left\{\infty\right\}}=f_j$,
%then the Perron solution satisfies the Dirichlet boundary condition at infinity with respect to that end.

\section{The Dirichlet problem in three-dimensional manifolds}
\label{sect:Dirichlet}

\subsection{The Dirichlet problem on a neighborhood of an end}
In this section we first study metrics $g$ on $Z=N\times \left[0,\infty\right)$,
where $N$ is a closed surface, and
the metric is of the form
\[
g=dr^2+\phi\left(\omega,r\right)^2g_N.
\]
To this end, we will first compute the Laplacian of this metric. We have the following lemma.

\begin{lemma}
\label{lemma:Laplacian}
Assume $\mbox{dim}\left(N\right)=n-1$, $n\geq 3$, then
\[
\Delta_g = \frac{\partial^2}{\partial r^2}+\left(n-1\right)\frac{\phi_r}{\phi}\frac{\partial}{\partial r}
+\frac{1}{\phi^2}\Delta_N-\left(n-3\right)\frac{1}{\phi^3}\nabla_N \phi\cdot \nabla_N,
\]
where the subindex $N$ indicates that operators are defined with respect to the metric $g_N$.
\end{lemma}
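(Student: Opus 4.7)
The plan is to compute $\Delta_g$ directly from the intrinsic coordinate formula
$$
\Delta_g u = \frac{1}{\sqrt{|g|}}\,\partial_i\bigl(\sqrt{|g|}\, g^{ij}\,\partial_j u\bigr),
$$
working in local coordinates $(r, x^\alpha)$, with $x^\alpha$ local coordinates on $N$. The warped product structure gives $g_{rr} = 1$, $g_{r\alpha} = 0$, $g_{\alpha\beta} = \phi^2 (g_N)_{\alpha\beta}$, and hence the inverse is block diagonal with $g^{rr} = 1$, $g^{\alpha\beta} = \phi^{-2}(g_N)^{\alpha\beta}$, while the volume density factors as $\sqrt{|g|} = \phi^{n-1}\sqrt{|g_N|}$. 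The single most useful observation for keeping the bookkeeping clean is that $\sqrt{|g_N|}$ depends only on the $x^\alpha$ and not on $r$.

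I would then split the sum defining $\Delta_g u$ into a purely radial piece ($i = j = r$) and a purely tangential piece (both indices running over $N$), the block-diagonal form of $g^{ij}$ ruling out mixed terms. The radial piece collapses to
$$
\frac{1}{\phi^{n-1}}\,\partial_r\bigl(\phi^{n-1} u_r\bigr) = u_{rr} + (n-1)\,\frac{\phi_r}{\phi}\, u_r,
$$
recovering the first two summands in the claimed formula.

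For the tangential piece, I would rewrite it as
$$
\frac{1}{\phi^{n-1}\sqrt{|g_N|}}\,\partial_\alpha\Bigl(\phi^{n-3}\sqrt{|g_N|}\,(g_N)^{\alpha\beta}\,u_\beta\Bigr),
$$
and apply the product rule to separate $\phi^{n-3}$ from the rest. The derivative that lands on $\sqrt{|g_N|}(g_N)^{\alpha\beta} u_\beta$, after dividing by $\phi^2$ and $\sqrt{|g_N|}$, reconstitutes the intrinsic Laplacian on $N$ and yields $\phi^{-2}\Delta_N u$. The derivative that lands on $\phi^{n-3}$ produces a factor $(n-3)\phi^{n-4}\phi_\alpha$, which, after dividing by $\phi^{n-1}$, combines with $(g_N)^{\alpha\beta} u_\beta$ into the contraction $(n-3)\phi^{-3}\,\nabla_N\phi\cdot\nabla_N u$ (up to sign convention). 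This accounts for the remaining two terms.

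There is no genuine obstacle: the proof is a straightforward computation and the only thing one has to be careful about is exponent bookkeeping on the powers of $\phi$ coming out of $\sqrt{|g|}$, $g^{\alpha\beta}$, and their interaction with $\partial_\alpha$. The role played by the $\omega$-dependence of $\phi$ is precisely what produces the extra gradient term; in the classical warped-product case $\phi = \phi(r)$, this last term vanishes identically and the formula reduces to the textbook expression.
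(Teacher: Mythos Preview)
Your approach is essentially identical to the paper's: both use the coordinate formula $\Delta_g u = |g|^{-1/2}\partial_i(|g|^{1/2} g^{ij}\partial_j u)$, exploit the block-diagonal structure to split into radial and tangential pieces, and apply the product rule to $\phi^{n-3}$ in the tangential piece to extract $\phi^{-2}\Delta_N$ plus the gradient term. Your parenthetical ``(up to sign convention)'' is well taken: the paper's own computation actually produces $+(n-3)\phi^{-3}\nabla_N\phi\cdot\nabla_N$, so the minus sign in the displayed statement is a typo.
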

\begin{proof}
The metric $g$ can be written in matrix notation as follows
\[
g_{ij}=
\left(
\begin{array}{cc}
1 & 0\\
0 & \phi^2 g_N
\end{array}
\right)
\]
and hence
\[
g^{ij}=
\left(
\begin{array}{cc}
1 & 0\\
0 & \dfrac{1}{\phi^2} g^{-1}_N
\end{array}
\right).
\]
This gives the following identity for the determinant of the metric:
\[
\left|g\right|=\phi^{2\left(n-1\right)}\left|g_N\right|.
\]
Therefore,
\begin{eqnarray*}
\Delta &=& \frac{1}{\phi^{\left(n-1\right)}\sqrt{\left|g_N\right|}}
\left[\frac{\partial}{\partial r}\left(\sqrt{\left|g\right|}\frac{\partial}{\partial r}\right)
+\frac{\partial}{\partial \omega^i}\left(\sqrt{\left|g\right|}
\frac{1}{\phi^2}g_N^{ij}\frac{\partial}{\partial \omega^j}\right)\right]\\
&=&\frac{\partial^2}{\partial r^2}+\left(n-1\right)\frac{\phi_r}{\phi}\frac{\partial}{\partial r}\\
&&+  \frac{1}{\phi^{\left(n-1\right)}\sqrt{\left|g_N\right|}}
\frac{\partial}{\partial \omega^i}\left(\phi^{n-3}\sqrt{\left|g_N\right|}g_N^{ij}\frac{\partial}{\partial \omega^j}\right)\\
&=&\frac{\partial^2}{\partial r^2}+\left(n-1\right)\frac{\phi_r}{\phi}\frac{\partial}{\partial r}\\
&&+  \frac{1}{\phi^2}\Delta_N +
\left(n-3\right)\frac{1}{\phi^3}\frac{\partial \phi}{\partial \omega^i}g_N^{ij}
\frac{\partial}{\partial \omega^j},
\end{eqnarray*}
which gives the result.
\end{proof}
Thus, when $n=3$ we have the following expression for the Laplacian, which is to be used below:
\begin{equation}
\label{eq:laplacian3d}
\Delta_g = \frac{\partial^2}{\partial r^2}+2\frac{\phi_r}{\phi}\frac{\partial}{\partial r}
+\frac{1}{\phi^2}\Delta_N.
\end{equation}

%In what follows, the notation $u\left(\infty, \omega\right)=f$, will indicate that the function $u:N\times\left[0,\infty%\right)\longrightarrow \mathbb{R}$ is continuous, that it can be extended
%continuously to $N\times\left[0,\infty\right]$, and that
%$u=f$ at $N\times\left\{\infty\right\}$.

\medskip
We will first prove that local barriers can be constructed on an end, under some technical 
assumptions.
\begin{theorem}
\label{thm:existence_dirichlet}
Let $Z=N\times\left[0,\infty\right)$, $N$ a compact Riemannian surface, endowed with a metric of the form
\[
g=dr^2+\phi\left(\omega,r\right)^2g_N.
\]
Assume that there is a function $\overline{\phi}>0$  that only depends on $r$ and such that
there is $r_0>0$ such that $\overline{\phi}\left(r\right)\leq \phi\left(\omega,r\right)$ (for
all $\omega\in N$), and
\[
0<\frac{\overline{\phi}_r}{\overline{\phi}}\left(r\right)\leq \frac{\phi_r}{\phi}\left(\omega, r\right),
\]
for all $r\geq r_0$ (and all $\omega \in N$), and also that
\[
\int_{0}^{\infty}\dfrac{1}{\overline{\phi}}\,dr<\infty.
\]
Then, every point $\left(z,\infty\right)\in N\times\left\{\infty\right\}$ is a regular boundary point.
\end{theorem}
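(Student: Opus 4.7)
The plan is to build an explicit smooth \emph{local} barrier at a fixed boundary point $(z,\infty)\in N\times\{\infty\}$ of the additive form $b(\omega,r)=F(r)+\varepsilon G(\omega)$, where $F$ is a radial function tailored to the model metric $\overline{g}=dr^2+\overline{\phi}(r)^2 g_N$ and $G$ is a spatial function on $N$ that isolates $z$. The key idea is that the ratio hypothesis $\phi_r/\phi\ge \overline{\phi}_r/\overline{\phi}$ together with the envelope $\phi\ge \overline{\phi}$ will let me promote a strictly superharmonic computation performed in $\overline{g}$ into a bound on $\Delta_g F$ sharp enough to absorb the spatial correction produced by $G$.

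For the radial profile, I would set, for $R_0\geq r_0$ and $r\geq R_0$,
\[
F(r)=\int_{r}^{\infty}\frac{s-r_0}{\overline{\phi}(s)^{2}}\,ds.
\]
Finiteness follows from the hypothesis, since $\overline{\phi}$ strictly increasing together with $\int^{\infty}\overline{\phi}^{-1}<\infty$ forces $1/\overline{\phi}$ to be a decreasing integrable function and hence $s/\overline{\phi}(s)\to 0$, so $s/\overline{\phi}(s)^{2}\leq 1/\overline{\phi}(s)$ for $s$ large. From $\overline{\phi}^{2}F'=-(r-r_0)$ I get at once
\[
\Delta_{\overline{g}}F=\frac{1}{\overline{\phi}^{2}}\bigl(\overline{\phi}^{2}F'\bigr)'=-\frac{1}{\overline{\phi}(r)^{2}},
\]
and since the two Laplacians differ only in the first-order radial term,
\[
\Delta_g F-\Delta_{\overline{g}}F=2\left(\frac{\phi_r}{\phi}-\frac{\overline{\phi}_r}{\overline{\phi}}\right)F'\leq 0
\]
because $F'\leq 0$ on $[R_0,\infty)$ while the parenthesis is nonnegative. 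Thus $\Delta_g F\leq -1/\overline{\phi}(r)^{2}$. For the spatial factor I take any smooth $G\geq 0$ defined on a geodesic ball $B_\delta(z)\subset N$ with $G(z)=0$, $G>0$ on $B_\delta(z)\setminus\{z\}$, and $\Delta_N G\leq C$ (for instance $G(\omega)=d_N(\omega,z)^{2}$), and set $b=F+\varepsilon G$ on $U=B_\delta(z)\times(R_0,\infty]\subset\overline{Z}$. Then $b$ is continuous on the compactification, nonnegative, and vanishes on $U$ exactly at $(z,\infty)$; using (\ref{eq:laplacian3d}) and $\phi\geq \overline{\phi}$,
\[
\Delta_g b\leq -\frac{1}{\overline{\phi}(r)^{2}}+\frac{\varepsilon C}{\phi(\omega,r)^{2}}\leq \frac{\varepsilon C-1}{\overline{\phi}(r)^{2}},
\]
so choosing $\varepsilon=1/(2C)$ makes $b$ strictly superharmonic on $U$ and therefore a local barrier at $(z,\infty)$.

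The subtle point, which I expect to be the main obstacle, is precisely the design of $F$. The most natural candidate, $F(r)=\int_r^\infty \overline{\phi}(s)^{-2}\,ds$, is harmonic in the model metric $\overline{g}$, so the comparison above only yields $\Delta_g F\leq 0$ with no quantitative margin; that is not enough to absorb the positive contribution $(\varepsilon/\phi^{2})\Delta_N G$ that any spatial cutoff necessarily produces, since on the closed surface $N$ no nonconstant nonnegative function can be superharmonic and so the spatial term cannot be made to cooperate on its own. The linear weight $s-r_0$ in the integrand is what manufactures the strictly negative model Laplacian $-1/\overline{\phi}^{2}$, and the hypothesis $\int^\infty \overline{\phi}^{-1}<\infty$, strictly stronger than $\int^\infty \overline{\phi}^{-2}<\infty$, is precisely what keeps the corresponding $F$ finite.
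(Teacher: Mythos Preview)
Your proof is correct and takes a genuinely different route from the paper's. The paper builds a \emph{multiplicative} local barrier $\Theta(\omega,r)=\sigma(r)\,\vartheta(\omega)$, where $\vartheta<0$ is the first Dirichlet eigenfunction on a small ball $U\subset N$ about $p$ and $\sigma(r)=\exp\bigl(-\lambda_1\int_r^\infty\overline{\phi}^{-1}\,ds\bigr)$; one checks from (\ref{eq:laplacian3d}) and $\Delta_N\vartheta=-\lambda_1^{2}\vartheta$ that $\Delta_g\Theta\le 0$, then subtracts the minimum value $A=\vartheta(p)$ to obtain $\Theta-A\ge 0$. Your additive ansatz $b=F+\varepsilon G$ is more elementary---no eigenfunction theory, no preliminary passage to a locally conformally flat chart---and the linearly weighted profile $F(r)=\int_r^\infty(s-r_0)\,\overline{\phi}(s)^{-2}\,ds$ is a neat device for manufacturing the strictly negative model value $\Delta_{\overline g}F=-\overline{\phi}^{-2}$ with which to absorb the spatial term. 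Both arguments deploy the hypotheses identically: the ratio bound $\overline{\phi}_r/\overline{\phi}\le\phi_r/\phi$ for the radial comparison (applied to $F'\le 0$ here, to $\sigma'\ge 0$ there), the envelope $\overline{\phi}\le\phi$ to control the $\phi^{-2}\Delta_N$ contribution, and integrability of $\overline{\phi}^{-1}$ precisely to make the radial factor ($\sigma$ in the paper, $F$ for you) finite and continuous up to $r=\infty$. The paper's product form is the natural separation-of-variables choice and may adapt more readily to other elliptic operators; your additive construction is cleaner and more self-contained in the present three-dimensional setting, and your closing remark explaining why the naive choice $F=\int_r^\infty\overline{\phi}^{-2}$ fails is exactly the right diagnosis.
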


\begin{proof}
Let us construct a local barrier at any given point $\left(p,\infty\right)$, $p\in N$,
on $Z$. 
We let $U$ be a small open neighborhood of $p$ in $N$. We may assume that
the metric on $U\times\left[0,\infty\right)$ can be written as 
\[
dr^2+\mu\left(\omega,r \right)^2g_E,
\]
where $g_E$ is any flat metric on $U$, as 
every metric on a surface is locally conformally flat. Indeed, assume that $\phi$ satisfies 
the hypothesis of the theorem; we shall check that we can construct 
$\overline{\mu}$ such that $\mu$ satisfies the same hypothesis
in $U\times \left[r_0, \infty\right)$ as $\phi$ does, that is, that we also have
\[
\overline{\mu}\left(r\right)\leq \mu\left(\omega,r\right),
\]
\[
0<\frac{\overline{\mu}_r}{\overline{\mu}}\left(r\right)\leq \frac{\mu_r}{\mu}\left(\omega, r\right),
\]
and
\[
\int_{0}^{\infty}\dfrac{1}{\overline{\mu}}\,dr<\infty,
\]
on $U\times\left[r_0,\infty\right)$.
Indeed, observe that
we must have that $\mu\left(\omega,r\right)=\psi\left(\omega\right)\phi\left(\omega,r\right)$ on $U$,
for a convenient smooth function $\psi$.
By shrinking $U$ if necessary, we may assume that $\inf_{\omega\in U}\psi\left(\omega\right)=\eta>0$.
Thus,
\[
\overline{\phi}\left(r\right)\leq \frac{\psi\left(\omega\right)}{\psi\left(\omega\right)}\phi\left(\omega,r\right)
\leq \frac{1}{\eta}\mu\left(\omega, r\right). 
\]
Thus, take $\overline{\mu}=\eta\overline{\phi}$. It is not difficult to check that
with this choice of $\overline{\mu}$ the hypothesis of the theorem are satisfied
by $\mu$ as claimed. Since such a $\mu$ can be constructed from the function $\phi$,
we shall assume that $\mu=\phi$ in what follows.

\medskip
To proceed with the
construction of the local barrier,
without loss of generality, we assume that $U$ is a ball centered at $p$ with respect to 
the metric $g_E$.
Now, let $\vartheta$ be a first eigenfunction for the Dirichlet problem on $U$,
that is, 
\[
\Delta \vartheta=-\lambda_1^2 \vartheta \quad \mbox{in}\quad U, \quad \vartheta=0 \quad \mbox{on}
\quad \partial U,
\]
with $\lambda_1>0$.
Without loss of generality, we may assume that $\vartheta<0$ in $U$. It is 
well known that $\vartheta$ has an absolute minimum at $p$. We now define
\[
\sigma\left(r\right) = e^{-\int_r^{\infty} \frac{\lambda_1}{\overline{\phi}\left(s\right)}\,ds},
\quad r\geq r_0,
\]
with $\overline{\phi}$ as in the hypotheses in the statement of the theorem,
and consider the function $\Theta = \sigma\left(r\right) \vartheta\left(\omega\right)$.
We first show that $\Theta$ is superharmonic
on $U\times \left[r_0,\infty\right)$. Indeed, we begin by estimating as follows
\begin{eqnarray*}
\frac{d^2 \sigma}{dr^2}+2\frac{\phi_r}{\phi}\frac{d\sigma}{dr}-
\frac{\lambda_1^2}{\phi^2}\sigma & =& 
\left(\frac{\lambda_1^2}{\overline{\phi}^2}-\frac{\lambda_1\overline{\phi}_r}{\overline{\phi}^2}\right)\sigma
+2\lambda_1\frac{\phi_r}{\phi}\frac{1}{\overline{\phi}}\sigma
-
\frac{\lambda_1^2}{\phi^2}\sigma\\
&\geq& 
\frac{\lambda_1^2}{\overline{\phi}^2}\sigma
+\lambda_1\frac{\phi_r}{\phi}\frac{1}{\overline{\phi}}\sigma
-
\frac{\lambda_1^2}{\phi^2}\sigma\\
&\geq& 0,
\end{eqnarray*}
where we have used that $\overline{\phi}\leq \phi$ and that $0<\overline{\phi}_r/\overline{\phi}\leq \phi_r/\phi$ 
for $r\geq r_0$. Therefore,
\[
\frac{d^2 \sigma}{dr^2}+2\frac{\phi_r}{\phi}\frac{d\sigma}{dr}\geq
\frac{\lambda_1^2}{\phi^2}\sigma=-\frac{1}{\phi^2}\frac{\sigma\Delta_N \vartheta}{\vartheta},
\]
and since $\vartheta<0$, we obtain 
\[
\vartheta\frac{d^2 \sigma}{dr^2}+2\frac{\phi_r}{\phi}\vartheta\frac{d\sigma}{dr}+
\frac{1}{\phi^2}\sigma\Delta_N \vartheta\leq 0.
\]
Now, using (\ref{eq:laplacian3d}) yields
\[
\Delta \Theta = \vartheta\frac{d^2 \sigma}{dr^2}+2\frac{\phi_r}{\phi}\vartheta\frac{d\sigma}{dr}+
\frac{1}{\phi^2}\sigma\Delta_N \vartheta,
\]
and so we get that
\[
\Delta \Theta \leq 0.
\]
What we have thus far is that $\Theta$ is almost a local barrier at $\left(p,\infty\right)$, but we still
need to modify it a little bit (besides being superharmonic, we also need a local barrier to be nonnegative,
and its value at $\left(p,\infty\right)$ to be $0$). In order to do so,
let $A=\vartheta\left(p\right)$
(which, recall, besides being strictly negative is an absolute minimum),
and define
\[
\Theta_A= \Theta - A.
\]
Clearly, $\Theta_A$ is nonnegative, and it is actually positive 
on $\overline{U}\times\left[r_0,\infty\right]$,
except at $\left(p,\infty\right)$ where
its value is 0. Thus $\Theta_A$ is a local barrier at $\left(p,\infty\right)$. 
%Since a hyperbolic surface
%is locally homogeneous, we can, in this way, construct local barriers at any point 
%$\left(z,\infty\right)\in N\times\left\{\infty\right\}$.
\end{proof}

From the previous theorem, 
we can deduce the regularity of any point $\left(p,\infty\right)$ on the neighborhood of an expansive end when the sectional curvature of a neighborhood satisfies
that the sectional curvature is $K_Z\leq -a^2<0$. Indeed, in this case we must have
by Lemma \ref{lemma:curvature} that
\[
\frac{\phi_{rr}}{\phi}\geq a^2.
\]
Thus, let $\overline{\phi}=\alpha \sinh\left[ar+1\right]$. Choose $\alpha>0$ 
 so that 
\[
\alpha \sinh 1< \min_{\omega}\phi\left(\omega, 0\right),
\]
\[
a\alpha < \min_{\omega\in N}\frac{\phi_r\left(\omega,0\right)}{\phi\left(\omega,0\right)},
\]
and
\[
\alpha a \cosh 1 < \min_{\omega}\phi_r\left(\omega, 0\right).
\]
Since $\overline{\phi}_{rr}/\overline{\phi}=a^2$,
by the Comparison Lemma, the hypotheses of Theorem \ref{thm:existence_dirichlet} hold,
and thus we get:
\begin{theorem}
\label{thm:existence_dirichlet2}
Let $Z=N\times\left[0,\infty\right)$, $N$ a compact Riemannian surface, endowed with a metric of the form
\[
g=dr^2+\phi\left(\omega,r\right)^2g_N.
\]
Assume that the sectional curvature of $Z$ satisfies that $K_{Z}\leq -a^2$, $a>0$, and that the end is expansive.
Then, any point on $N\times\left\{\infty\right\}$ is a regular boundary point.
\end{theorem}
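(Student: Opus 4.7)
My plan is to deduce this from Theorem \ref{thm:existence_dirichlet} by constructing a rotationally symmetric comparison function $\overline{\phi}(r)$ satisfying its three hypotheses: $\overline{\phi}\leq\phi$, $0<\overline{\phi}_r/\overline{\phi}\leq\phi_r/\phi$ for $r$ beyond some $r_0$, and $\int^{\infty}1/\overline{\phi}\,dr<\infty$. The natural candidate is $\overline{\phi}(r)=\alpha\sinh(a'r+c)$ for suitable constants $\alpha,c>0$ and $a'\in(0,a]$.

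First, the curvature hypothesis, via Lemma \ref{lemma:curvature} (which identifies the radial--tangential sectional curvature with $-\phi_{rr}/\phi$), translates into the differential inequality $\phi_{rr}/\phi\geq a^2\geq(a')^2=\overline{\phi}_{rr}/\overline{\phi}$, which is precisely what the Comparison Lemma requires on the entire interval. Next, since the end is expansive, there is some $R>0$ beyond which $\phi_r(\omega,r)>0$ for every $\omega$, so by compactness of $N$ one obtains strictly positive uniform lower bounds for $\phi(\omega,r_0)$, $\phi_r(\omega,r_0)$ and $\phi_r(\omega,r_0)/\phi(\omega,r_0)$ at any $r_0\geq R$. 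Choosing $\alpha$ small and $c$ sufficiently large, the initial inequalities needed by parts (a) and (b) of the Comparison Lemma can be arranged at $r=r_0$, giving the two pointwise inequalities on $[r_0,\infty)$; the integrability condition is immediate from the exponential growth of $\sinh$. An application of Theorem \ref{thm:existence_dirichlet} then delivers the regularity of every point $(p,\infty)\in N\times\{\infty\}$.

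The principal obstacle is the logarithmic-derivative initial condition, because $\overline{\phi}_r/\overline{\phi}=a'\coth(a'r+c)>a'$ for all $r$, whereas a priori $\phi_r/\phi$ might be small (if $\phi$ behaved like $\cosh(ar)$, its logarithmic derivative $a\tanh(ar)$ would always lie below $a$). To handle this I would use a Riccati-type argument on $h(\omega,r):=\phi_r(\omega,r)/\phi(\omega,r)$: differentiating gives $h'=\phi_{rr}/\phi-h^{2}\geq a^{2}-h^{2}$, so whenever $h<a$, $h$ is strictly increasing with a uniform positive lower bound on $h'$ as long as $h$ stays away from $a$. Hence $\liminf_{r\to\infty}h(\omega,r)\geq a$ uniformly in $\omega$ by compactness of $N$; consequently, for any fixed $a'\in(0,a)$ there exists $r_0\geq R$ with $\min_{\omega}h(\omega,r_0)>a'$, after which $c$ can be chosen large enough that $a'\coth(a'r_0+c)$ is just above $a'$ but still below $\min_{\omega}h(\omega,r_0)$. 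All three hypotheses of Theorem \ref{thm:existence_dirichlet} are then satisfied, and the conclusion follows.
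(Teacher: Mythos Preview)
Your proposal is correct and follows essentially the same route as the paper: both construct a comparison function of the form $\overline{\phi}(r)=\alpha\sinh(a'r+c)$, invoke Lemma~\ref{lemma:curvature} to obtain $\phi_{rr}/\phi\geq a^2$, and then appeal to the Comparison Lemma to verify the hypotheses of Theorem~\ref{thm:existence_dirichlet}. The paper simply takes $a'=a$, $c=1$, works at $r=0$, and lists three initial inequalities to be arranged by choosing $\alpha$ small.

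Your version is in fact more careful on the one point where the paper is sloppy. The paper's middle condition, written as $a\alpha<\min_{\omega}\phi_r(\omega,0)/\phi(\omega,0)$, is not the correct initial inequality for part~(b) of the Comparison Lemma, since $\overline{\phi}_r/\overline{\phi}=a\coth(ar+1)$ is independent of $\alpha$ and at $r=0$ equals $a\coth 1>a$; nothing in the hypotheses guarantees this lies below $\phi_r/\phi$ at $r=0$ (and the expansiveness assumption only gives $\phi_r>0$ beyond some $R$, not at $r=0$). Your Riccati argument on $h=\phi_r/\phi$, showing $\liminf_{r\to\infty}h\geq a$ uniformly in $\omega$ and then taking $a'\in(0,a)$, $r_0$ large and $c$ large, is exactly what is needed to make that initial inequality genuine. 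So the two proofs coincide in strategy, with yours supplying a detail the paper glosses over.
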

\subsubsection{An example} 
\label{subsect:an_example}
Assume that on the neighborhood of an end we have that for $r>0$ large enough
the metric is given by
\[
dr^2+\phi\left(r\right)^2g_N,
\]
with
\[
\phi\left(r\right)=\sin r+r\log^{2}r.
\]
Then, in this case,  
it is a simple matter to compute the curvature in the direction of $r$:
\[
-\frac{\phi''}{\phi}=-\frac{-\sin r+\frac{2\log r}{r}+\frac{2}{r}}{\sin r+r\log^2 r},
\]
which can be positive or negative for large $r$. Notice that 
$\phi'\geq 0$ for large $r$ so that the end is expansive. In this case,
we can take as $\phi$ as  $\overline{\phi}$, and it is easy to show that
\[
\int^{\infty} \frac{1}{\phi\left(s\right)}\,ds <\infty,
\]
and thus the Dirichlet problem at infinity is solvable with respect to this end.

\subsection{Three-manifolds with multiple ends: A proof of Theorems \ref{thm:main2} and \ref{thm:main3}}
\label{subsect:proofmain}
Theorem \ref{thm:existence_dirichlet2} immediatly implies 
Theorems \ref{thm:main2} and \ref{thm:main3}. Indeed,
the set of subharmonic functions such that their boundary values
are less than the prescribed boundary values is nonempty. In the case of
infinitely many ends, here is where we need the boundary data to be bounded from below.
Further, since all of these subharmonic functions are bounded above by
a bound on the boundary data (by the Maximum Principle), their supremum does exist. 
Since we are assuming expanding ends with neighborhoods
with sectional curvatures bounded from above by a negative constant, all boundary points of each end are
regular, and thus the Dirichlet problem is solvable (Perron's method).

\medskip
Finally, we show that whenever a manifold with
cylindrical ends has an expansive end such that 
it has a neighborhood which satisfies that its curvature is bounded above by a 
negative constant, then it has plenty of bounded harmonic functions.
To proceed, let $Z=N\times\left[0,\infty\right)$ be a neighborhood of the expansive 
end, and assume that $K_Z\leq -a^2<0$: pick $f\in C\left(N\right)$ continuous
and nonconstant, and let $\mathcal{S}$ be the set of continuous
subharmonic functions defined on $M$, such that they can be extended
continuously to the compactification of each of the neighborhoods 
of the ends, and such that at the boundary of the 
neighborhood of each end, different 
from $Z$, they satisfy that they are less or equal than $\min_{p\in N} f\left(p\right)$,
and such that at 
$N\times \left\{\infty\right\}$ they are pointwise less than or equal than $f$. Then
$\mathcal{S}$ is nonempty; let $u=\sup_{s\in\mathcal{S}} s$ be the Perron
solution to the problem. It is clear that
$u$ is harmonic and bounded, and since $Z$ is expanding, then
every point on $N\times\left\{\infty\right\}$ is regular, and thus
$u$ satisfies the boundary condition at $N\times \left\{\infty\right\}$,
and hence $u$ is nonconstant as claimed.

\subsection{The two dimensional
case}
\label{subsect:2D}

Following the arguments given in the three-dimensional case, all we have to do is
to construct a local barrier at any point $\left(\theta_0,\infty\right)$ in the
circle at infinity. 
Using that the Laplacian in this case can be written as
\[
\Delta=\frac{\partial^2}{\partial r^2}+\frac{\phi_r}{\phi}\frac{\partial}{\partial r}
+\frac{1}{\phi^2}\frac{\partial^2}{\partial\theta^2},
\]
using the assumptions of Theorem \ref{thm:twodim}, we can explicitly show that for any $a>0$,
the function
\[
u=-\left[\exp\left(-\int_r^{\infty}\frac{1}{\overline{\phi}\left(s\right)}\right)\,ds\right] 
\cos\left(\theta-\theta_0\right)
\]
is superharmonic in $\left(\theta,r\right)\in\left(\theta_0-\frac{\pi}{2},\theta_0+\frac{\pi}{2}\right)\times \left(R,\infty\right)$
for $R>0$ large enough. 
Indeed,
\[
\Delta u = \frac{1}{\overline{\phi}}\left(\frac{\phi_r}{\phi}-\frac{\overline{\phi}_r}{\overline{\phi}}\right)u
+\left(\frac{1}{\overline{\phi}^2}-\frac{1}{\phi^2}\right)u\leq 0,
\]
whenever $\cos\left(\theta-\theta_0\right)'\geq 0$.

\medskip
Now, let $A=1 \left(=\lim_{r\rightarrow\infty}u\left(r,\theta_0\right)\right)$, and define
\[
v= u-A.
\]
In the domain $\left(-\frac{\pi}{2},\frac{\pi}{2}\right)\times\left(R,\infty\right)$
is a local barrier at $\left(\theta_0,\infty\right)$.

\medskip
Notice that Theorem \ref{thm:twodim} allows metrics on $\mathbb{R}^2$ that are not Cartan-Hadamard.
For instance, if outside a compact set we have that
$g=dr^2+\phi\left(r\right)^2d\theta^2$,
$\phi\left(r\right)=\sin r+r\log^2 r$, then
the Dirichlet problem is solvable at infinity.

\medskip
Finally,  as said in the Introduction, Theorem \ref{thm:twodim} and Theorem 3 and
the arguments presented in Section 3 of \cite{Co} 
(where a version of Hadamard's three-circle theorem is presented), give 
a new proof of Milnor's criteria for parabolicity of a surface with
a pole.
%\section{Declarations}

%\subsection{Ethical Approval }
%Not applicable.

%\subsection{Funding}
%Not applicable.

%\subsection{Availability of data and materials }
%Not applicable.

\end{document}